\documentclass[12pt,reqno]{amsart}

\usepackage{enumitem} 
\setlist{leftmargin=24pt}

\usepackage{amsmath,amsfonts,amssymb,amscd,amsthm,calc}
\usepackage[english]{babel}
\usepackage{yfonts,color}

\usepackage{tikz}

\bibliographystyle{plain}

\newtheorem{theorem}{Theorem}[section]
\newtheorem{proposition}[theorem]{Proposition}
\newtheorem{lemma}[theorem]{Lemma}
\newtheorem{corollary}[theorem]{Corollary}

\newtheorem{question}[theorem]{Question}

\theoremstyle{definition}
\newtheorem{definition}[theorem]{Definition}

\newcommand{\darrow}{\!\downarrow}
\newcommand{\uarrow}{\!\uparrow}
\newcommand{\la}{\langle}
\newcommand{\ra}{\rangle}

\newcommand{\bigset}[1]{\big\{ #1 \big\}}
\newcommand{\restr}{\mbox{\raisebox{.5mm}{$\upharpoonright$}}}

\renewcommand{\leq}{\leqslant}
\renewcommand{\geq}{\geqslant}

\newcommand{\fa}{\forall}
\newcommand{\ex}{\exists}
\newcommand{\vph}{\varphi}
\newcommand{\imp}{\rightarrow}

\newcommand{\A}{\mathcal{A}}
\newcommand{\B}{\mathcal{B}}
\newcommand{\E}{\mathcal{E}}
\newcommand{\G}{\mathcal{G}}
\newcommand{\K}{\mathcal{K}}

\newcommand{\PP}{\mathcal{P}}

\newcommand{\PA}{\mathrm{PA}}
\newcommand{\con}{\mathrm{con}}
\newcommand{\Cmpl}{\mathrm{Cmpl}}

\begin{document}

\title[Complexity of completions in pca]
{The complexity of completions in partial combinatory algebra}

\author[S. A. Terwijn]{Sebastiaan A. Terwijn}
\address[Sebastiaan A. Terwijn]{Radboud University Nijmegen\\
Department of Mathematics\\
P.O. Box 9010, 6500 GL Nijmegen, the Netherlands.
} \email{terwijn@math.ru.nl}

\begin{abstract} 
We discuss the complexity of completions of partial combinatory 
algebras, in particular of Kleene's first model. 
Various completions of this model exist in the literature, 
but all of them have high complexity. We show that although 
there do not exist computable completions, there exists 
completions of low Turing degree. 
We use this construction to relate completions of 
Kleene's first model to complete extensions of $\PA$. 
We also discuss the complexity of pcas defined from 
nonstandard models of $\PA$.
\end{abstract}

\keywords{partial combinatory algebra, Peano Arithmetic}

\subjclass[2010]{
03D28, 
03D45, 
03D80. 
}

\date{\today}

\maketitle

\section{Introduction}

Partial combinatory algebra (pca) generalizes the setting of 
classical combinatory algebra to structures with a partial 
application operator. 
The first entry in the literature is Feferman~\cite{Feferman}, 
which is surprisingly late,
some fifty years after the invention of combinatory algebra and 
the closely related lambda calculus, although the concept of a 
pca existed before that (see section~\ref{sec:cmpl}).
Apart from this connection with lambda calculus, pcas have 
played a notable part in constructive mathematics. At the 
end of section~\ref{sec:pas} below, we list a number of key 
examples of pcas, and say something about their role in 
various settings. 

Since the application operator in pcas is partial, they can 
often be naturally represented as c.e.\ structures in the 
sense of Selivanov \cite{Selivanov} and Khoussainov~\cite{Khoussainov}. 
The computable structure theory of pcas as partial c.e.\ structures
was recently studied by Fokina and Terwijn~\cite{FokinaTerwijn}.
Since pcas can be seen as abstract models of computation, it is 
only natural to consider their complexity as algebraic structures 
from the viewpoint of computability theory. 
At least for countable pcas there is a straightforward definition 
of their complexity in terms of the complexity of a presentation, 
as in computable model theory. 
Below we formulate this using numberings (Definition~\ref{def:computable}).
Some of the complexity of pcas was studied earlier in 
Shafer and Terwijn \cite{ShaferTerwijn} and 
Golov and Terwijn \cite{GolovTerwijn}. 
In the current paper we focus on the complexity of completions, 
and in particular of completions of what is called Kleene's first 
model $\K_1$, with application defined in terms of partial computable 
functions on the natural numbers. 

A completion of a pca is a total pca (i.e.\ a combinatory algebra 
in the classical sense, in which applications are always defined) 
in which the pca can be embedded. 
Not every pca has a completion, as was first proved in Klop~\cite{Klop}. 
On the other hand, Kleene's $\K_1$ {\em does\/} have a completion. 
This follows from the sufficient condition for completability given 
in Bethke et {al.}~\cite{BethkeKlopVrijer1996}. 
This yields a certain term model $T(\omega)/\!\!\sim$ as a 
completion of~$\K_1$. 
In section~\ref{sec:cmpl} below we will discuss how Scott's 
graph model, which is another important example of a pca, 
can also be seen as a (weak) completion of $\K_1$.
We note, however, that these completions of $\K_1$ have 
high complexity, which brings up the question what the optimal 
complexity of such a completion could be. 
Although no computable completions of $\K_1$ exist 
(cf.\ Theorem~\ref{thm:noncomp} and the remarks following it), 
we show that there exist completions of $\K_1$ of low Turing degree 
(Theorem~\ref{low}). Such completions are close to computable in 
the sense that the complexity of their halting problem is the 
same as the standard halting problem. 

All this suggest a connection with complete extensions of 
Peano arithmetic, for which a similar story exists. 
Note however that we are talking here about pcas, i.e.\ the 
models of a theory, rather than the theory itself. 
Nevertheless, in section~\ref{sec:PA} we show that indeed there 
is a connection. 
Finally, in section~\ref{sec:nonstandard} we discuss the complexity 
of pcas resulting from nonstandard models of $\PA$.

Our notation from computability theory is mostly standard. 
For unexplained notions we refer to 
Odifreddi \cite{Odifreddi} or Soare \cite{Soare}.
In particular $\omega$ denotes the natural numbers, and
$\vph_e$ the $e$-th partial computable (p.c.) function.
For any set $A$, $A'$ denotes the Turing jump of $A$, 
and in particular $\emptyset'$ denotes the halting problem.

\section{Partial combinatory algebras} \label{sec:pas}

To make the paper self-contained, we briefly review the basic 
definitions from partial combinatory algebra. Our presentation 
follows van Oosten \cite{vanOosten}.

A {\em partial applicative structure\/} (pas) is a set $\A$ together
with a partial map $\cdot$ from $\A\times \A$ to $\A$.  
We usually write $ab$ instead of $a\cdot b$, and think of this as 
`$a$ applied to $b$'. If this is defined we denote this by $ab\darrow$.  
By convention, application associates to the left, 
so we write $abc$ instead of $(ab)c$.  
{\em Terms\/} over $\A$ are built from elements of $\A$,
variables, and application. If $t_1$ and $t_2$ are terms then so is
$t_1t_2$.  If $t(x_1,\ldots,x_n)$ is a term with variables $x_i$, and
$a_1,\ldots,a_n \in\A$, then $t(a_1,\ldots,a_n)$ is the term obtained
by substituting the $a_i$ for the~$x_i$.  For closed terms
(i.e.\ terms without variables) $t$ and $s$, we write $t \simeq s$ if
either both are undefined, or both are defined and equal. 
Here application is \emph{strict} in the sense that for $t_1t_2$ to be 
defined, it is required that both $t_1$ and $t_2$ are defined.

\begin{definition}\label{def}
A pas $\A$ is called {\em combinatory complete\/} if for any term
$t(x_1,\ldots,x_n,x)$, $n\geq 0$, with free variables among
$x_1,\ldots,x_n,x$, there exists $b\in \A$ such that
for all $a_1,\ldots,a_n,a\in \A$,
\begin{enumerate}[label={\rm (\roman*)}] 

\item $ba_1\cdots a_n\darrow$,

\item $ba_1\cdots a_n a \simeq t(a_1,\ldots,a_n,a)$.

\end{enumerate}
A pas $\A$ is a {\em partial combinatory algebra\/} (pca) if
it is combinatory complete. 
A combinatory algebra (ca) is a pca for which the application 
operator is total. 
\end{definition}

Combinatory completeness of pcas can be characterized by the 
existence of combinators $k$ and $s$, just as in classical 
combinatory algebra and lambda calculus. 
In \cite{vanOosten} it is stated that the following theorem is 
``essentially due to Feferman~\cite{Feferman}.'' 

\begin{theorem} {\rm (Feferman)} \label{Feferman}
A pas $\A$ is a pca if and only if it has elements $k$ and $s$
with the following properties for all $a,b,c\in\A$:
\begin{itemize}

\item $ka\darrow$ and $kab = a$, 

\item $sab\darrow$ and $sabc \simeq ac(bc)$.

\end{itemize}
\end{theorem}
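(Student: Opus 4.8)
The plan is to prove the two directions separately; the ``only if'' direction is essentially a two-line application of combinatory completeness, and all the real work is in the ``if'' direction. For ``only if'', assume $\A$ is a pca. Applying Definition~\ref{def} to the term $t(x_1,x)=x_1$ (with $n=1$) produces an element $k$ satisfying $ka\darrow$ and $kab=a$ for all $a,b\in\A$ (the value $kab\simeq t(a,b)$ is defined because the right-hand side is), which is the first clause. Applying it to $t(x_1,x_2,x)=x_1x(x_2x)$ (with $n=2$) produces an element $s$ satisfying $sab\darrow$ and $sabc\simeq ac(bc)$ for all $a,b,c\in\A$, which is the second clause.

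For ``if'', the tool is bracket abstraction. First I would note that $i:=skk$ satisfies $i\darrow$ and $ia=a$ for all $a\in\A$, using $skka\simeq ka(ka)$ followed by $kab=a$. Then, for every term $t$ over $\A$ and every variable $x$, I define a term $\lambda^{*}x.t$ by recursion on the structure of $t$: put $\lambda^{*}x.x=i$; put $\lambda^{*}x.t=kt$ if $t$ is a variable different from $x$ or an element of $\A$; and put $\lambda^{*}x.(t_1t_2)=s\,(\lambda^{*}x.t_1)\,(\lambda^{*}x.t_2)$. The key lemma, proved by induction on the structure of $t$, is: the free variables of $\lambda^{*}x.t$ are exactly those of $t$ other than $x$, and whenever the free variables of $\lambda^{*}x.t$ are among $x_1,\dots,x_n$, then for all $a_1,\dots,a_n,a\in\A$ the term $(\lambda^{*}x.t)(a_1,\dots,a_n)$ is defined and $(\lambda^{*}x.t)(a_1,\dots,a_n)\,a\simeq t(a_1,\dots,a_n,a)$. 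The base cases follow from $ia=a$ and $kab=a$; the induction step applies $sabc\simeq ac(bc)$ together with the observation that $\simeq$ is preserved by forming applications, which itself uses the strictness of application.

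I expect the interaction with strictness to be the one delicate point. In particular, one must avoid the shortcut of setting $\lambda^{*}x.t=kt$ whenever $x$ does not occur in $t$: after substituting elements, a compound term $t$ may become undefined while $\lambda^{*}x.t$ must be total, so the recursion has to descend all the way to atomic subterms. Granting the lemma, combinatory completeness is immediate: given $t(x_1,\dots,x_n,x)$, the term $b:=\lambda^{*}x_1.\lambda^{*}x_2.\cdots\lambda^{*}x_n.\lambda^{*}x.t$ is closed, and $n+1$ applications of the lemma show that $b$ denotes an element of $\A$ with $ba_1\cdots a_n\darrow$ and $ba_1\cdots a_n a\simeq t(a_1,\dots,a_n,a)$, i.e.\ clauses (i) and (ii) of Definition~\ref{def} hold; the degenerate case $n=0$ is covered by the same argument.
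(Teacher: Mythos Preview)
The paper does not actually give a proof of this theorem: it is stated as a known result attributed to Feferman and cited from van Oosten, with no argument supplied. So there is nothing to compare your proof against in the paper itself.

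That said, your proof is correct and is precisely the standard one (as found, e.g., in van Oosten's book). The ``only if'' direction is immediate from combinatory completeness, as you say. For the ``if'' direction your bracket abstraction $\lambda^{*}x.t$ is the usual Curry-style abstraction, and your inductive lemma is exactly what is needed. You are also right to flag the strictness issue: in the partial setting one must recurse down to atomic subterms rather than taking the shortcut $\lambda^{*}x.t = kt$ whenever $x\notin\mathrm{FV}(t)$, since otherwise $(\lambda^{*}x.t)(a_1,\ldots,a_n)$ could be undefined. This is the one point where the partial case genuinely differs from the total one, and you have handled it correctly.
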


In the following, we will always assume that our pcas are 
{\em nontrivial}, that is, have more than one element. 
This automatically implies that they are infinite and have $k\neq s$.

The prime example of a pca is Kleene's first model $\K_1$ that was 
already mentioned in the introduction. 
This is a model defined on the natural numbers, with application 
$$
n \cdot m = \vph_n(m).
$$
Thus $\K_1$ models the setting of classical computability theory.
We can also relativize this to an arbitrary oracle $X$, thus obtaining 
the relativized pca $\K_1^X$. 

Kleene's second model $\K_2$, 
from the book Kleene and Vesley~\cite{KleeneVesley}, 
is a pca defined on Baire space $\omega^\omega$.
Application $\alpha \cdot \beta$ in this model can be informally 
described as applying the continuous functional with code $\alpha$ 
to the real $\beta$. The original coding of $\K_2$ is a bit 
cumbersome, but it is essentially equivalent to 
$$
\alpha\cdot\beta = \Phi_{\alpha(0)}^{\alpha\oplus\beta}, 
$$
where $\Phi_e$ is the $e$-th Turing functional, and the application 
is understood to be defined if the RHS is total. 
This coding, used in \cite{ShaferTerwijn}, is easier to work with. 
See the appendix of \cite{GolovTerwijn} for a proof (and precise 
statement) of the equivalence with the original coding.

An interesting variant of $\K_2$, called the van Oosten model, is obtained 
by extending the domain to include partial functions, 
cf.\ \cite{vanOostenSeqComp}. 
$\K_2$ is uncountable, but restricting attention to computable sequences 
gives a countable pca $\K_2^\mathrm{eff}$. 
Similarly, restricting to $X$-computable 
sequences gives a pca $\K_2^X$ for every~$X$. 
In \cite{GolovTerwijn} the relations between these and other pcas 
are studied using embeddings. 

Many other examples of pcas can be found in the literature. 
For example, pcas have been extensively used in constructive 
mathematics, see 
Beeson~\cite{Beeson},  
Troelstra and van Dalen~\cite{TroelstravanDalenII}.
In particular, they have been used as a basis for models 
of constructive set theory, as in 
McCarty~\cite{McCarty}, 
Rathjen~\cite{Rathjen}, 
and Frittaion and Rathjen~\cite{FrittaionRathjen}.

A pca that has been particularly important in connection with 
combinatory algebra and lambda calculus is 
Scott's graph model~\cite{Scott}. 
This pca is a model of the lambda calculus 
(see Barendregt~\cite{Barendregt}), and it is also closely 
related to the enumeration degrees in computability 
theory, cf.\ Odifreddi~\cite{OdifreddiII}.
We will discuss this model in section~\ref{sec:cmpl}, 
where we also explain how the restriction of this model 
to the c.e.\ sets can be seen as a completion of~$\K_1$.

\section{Effective presentations of pcas}

Below, we will call a combinatory algebra $\A$ $Y$-computable if 
$\A$ has a representation such that the application $\cdot$ 
in $\A$ is $Y$-computable. 
We will also require that equality on $\A$ is $Y$-decidable.
The following definition (similar to notions 
used in \cite{GolovTerwijn}) makes this precise, 
using numberings to represent~$\A$.
Recall that a numbering is a surjective function $\gamma:\omega\to\A$.  
We think of $n\in\omega$ as a code for $\gamma(n)\in\A$.

\begin{definition}\label{def:computable}
Let $\A$ be a pca and $Y\subseteq\omega$. 
We call $\A$ {\em partial $Y$-computable\/} if there exist a numbering 
$\gamma:\omega \to \A$ and a partial $Y$-computable function $\psi$ 
such that for all $n$ and $m$, 
$\gamma(n)\cdot\gamma(m)\darrow$ in $\A$ if and only if 
$\psi(n,m)\darrow$, and 
\begin{equation}\label{pc}
\gamma(n)\cdot\gamma(m) = \gamma(\psi(n,m)).
\end{equation}
We also require that equality on $\A$ is $Y$-decidable, meaning that 
the set $\{(n,m)\mid \gamma(n)=\gamma(m)\}$ is $Y$-computable.
If $\A$ is total, i.e.\ a ca, then $\psi$ is total, and we 
simply call $\A$ {\em $Y$-computable\/}. (This is consistent with 
Definition~\ref{def:ce} below.)
\end{definition}

Notice that the numbering $\gamma$ in Definition~\ref{def:computable} 
is not required to be computable in any way.  
Also, nontrivial combinatorial algebras are never computable, 
cf.\ Barendregt~\cite[5.1.15]{Barendregt}.

Definition~\ref{def:computable} focusses on representing application 
in a pca as a p.c.\ function. 
For the record, we also mention another way to define effective  
representations. 

\begin{definition}\label{def:ce} 
We call a pca $\A$ {\em $Y$-c.e.}\ if there exist a numbering 
$\gamma:\omega \to \A$ such that the set 
\begin{equation}\label{ce}
\bigset{(n,m,k)\mid \gamma(n)\cdot\gamma(m)\darrow = \gamma(k)}
\end{equation}
is $Y$-c.e.
Again we also require that equality on $\A$ is $Y$-decidable, meaning 
that the set $\{(n,m)\mid \gamma(n)=\gamma(m)\}$ is $Y$-computable.
We call $\A$ {\em $Y$-computable\/} if the set \eqref{ce} 
is $Y$-computable.\footnote{
Note that this definition of computable pca is different from 
the definition of {\em decidable\/} pca in 
van Oosten~\cite[Definition 1.3.7]{vanOosten}, 
which refers to the decidability of equality inside the pca, 
using an element of the pca.}
\end{definition}

As an example, note that $\K_1$ is p.c.\ in the sense that $a\cdot b$ 
is a p.c.\ function on $\omega$, and that $\K_1$ is c.e.\ in the sense 
that $a\cdot b\darrow = c$ is a c.e.\ relation. 
We note here that the two definitions are equivalent:

\begin{proposition} 
A pca is partial $Y$-computable if and only if it is $Y$-c.e.
\end{proposition}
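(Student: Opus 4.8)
The plan is to prove the two implications separately, in each direction extracting the required object from the given data. Note first that the equality clause is verbatim the same in Definitions~\ref{def:computable} and~\ref{def:ce} ($Y$-decidability of $\{(n,m)\mid\gamma(n)=\gamma(m)\}$), so it transfers for free; only the clauses governing application need work, and in both directions we keep the \emph{same} numbering $\gamma$.

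For the forward direction, suppose $\A$ is partial $Y$-computable, witnessed by $\gamma$ and a partial $Y$-computable $\psi$ satisfying \eqref{pc}. I would show the set in \eqref{ce} is $Y$-c.e. By the defining property of $\psi$, we have $\gamma(n)\cdot\gamma(m)\darrow = \gamma(k)$ exactly when $\psi(n,m)\darrow$ and $\gamma(\psi(n,m)) = \gamma(k)$. So, relative to $Y$, dovetail the computations $\psi(n,m)$ over all pairs $(n,m)$; whenever one halts with value $j$, use $Y$-decidability of equality on $\A$ to run through all $k$ with $\gamma(k)=\gamma(j)$ and output each triple $(n,m,k)$. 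This enumerates precisely \eqref{ce}, so $\A$ is $Y$-c.e.

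For the converse, suppose $\A$ is $Y$-c.e.\ via $\gamma$, so the set $R$ of \eqref{ce} is $Y$-c.e. Fix a $Y$-computable enumeration of $R$ and define $\psi(n,m)$ by searching that enumeration for a triple of the form $(n,m,k)$ and returning the first such $k$ found (diverging if none ever appears). Then $\psi$ is partial $Y$-computable; $\psi(n,m)\darrow$ iff some $k$ has $(n,m,k)\in R$ iff $\gamma(n)\cdot\gamma(m)\darrow$; and when $\psi(n,m)\darrow$, the triple $(n,m,\psi(n,m))\in R$ yields $\gamma(n)\cdot\gamma(m) = \gamma(\psi(n,m))$, which is \eqref{pc}. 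Hence $\A$ is partial $Y$-computable.

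I do not expect a genuine obstacle here; the argument is a routine unwinding of the two definitions. The only point that calls for (mild) care is that numberings need not be injective: in the first direction one cannot simply emit $(n,m,\psi(n,m))$ but must close off under the $Y$-decidable equality relation, and in the second direction the function $\psi$ returns merely \emph{some} code for the value $\gamma(n)\cdot\gamma(m)$ rather than a canonical one --- which is exactly what the formulation \eqref{pc} permits.
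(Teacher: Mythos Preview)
Your proof is correct and follows essentially the same approach as the paper: the paper likewise keeps the numbering $\gamma$ fixed, handles the $Y$-c.e.\ $\Rightarrow$ partial $Y$-computable direction by searching the enumeration of \eqref{ce} for a suitable $k$, and handles the other direction by enumerating all $(n,m,k)$ with $\psi(n,m)\darrow$ and $\gamma(k)=\gamma(\psi(n,m))$, explicitly invoking $Y$-decidability of equality. Your remarks about noninjectivity of $\gamma$ mirror the paper's own parenthetical observation.
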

\begin{proof}
We always have c.e.\ implies p.c.: 
Given $n,m$, search for $k$ such that $(n,m,k)$ is in the set \eqref{ce},
and define $\psi(n,m)$ to be the least $k$ found. 
(Note that $\gamma$ need not be injective, so there may be 
multiple such $k$.)
Then \eqref{pc} holds for $\psi(n,m)$. 

The converse direction uses the condition that equality on $\A$ is 
decidable. Given $\A$ p.c.\ and $\psi$ satisfying \eqref{pc}, 
enumerate $(n,m,k)$ if $\psi(n,m)\darrow$ and 
$\gamma(k) = \gamma(\psi(n,m))$.
This gives an enumeration of~\eqref{ce}.
\end{proof}

Note that the above definitions are in the spirit of the 
c.e.\ structures in Selivanov~\cite{Selivanov}, where they are 
called {\em positive\/} structures. These are defined as structures 
in which the predicates are c.e., and the functions are computable. 
The latter makes sense for total functions, but in the case of pcas
we are dealing with a partial application operator, in which case 
it is natural to have this as a c.e.\ function. 

For pcas on $\omega$ (i.e.\ with $\gamma:\omega \to \A$
the identity) we have that equality on $\A$ is decidable, 
so the two notions of pca are equivalent. 
This is the type of pca that was used in 
Fokina and Terwijn~\cite{FokinaTerwijn}.
Without the condition that equality on $\A$ is decidable (or c.e.), 
it is not clear that the two definitions are equivalent in general, 
though we do not know of an example of a pca that is 
p.c.\ but not c.e.

Finally, in the case of a $Y$-computable pca or ca $\A$
(which is what we will mostly use below), the requirement that 
equality on $\A$ is $Y$-decidable actually {\em follows\/} 
from the fact that \eqref{ce} is $Y$-computable, namely 
let $n$ be such that $\gamma(n)$ is the identity 
(which exists in any pca).

\section{Embeddings and isomorphisms}

There are at least three notions of embedding for pcas, 
depending on what structure is required to be preserved. 
For example, the choice of the combinators $k$ and $s$ from 
Theorem~\ref{Feferman} can be regarded as part of the structure 
or not. For instance, Zoethout~\cite[p33]{Zoethout} does not consider 
$k$ and $s$ to be part of the structure of a pca. 
We have the following notions of embedding of pcas:

\begin{itemize}

\item Only preserve applications. 
This notion was studied in 
Bethke \cite{Bethke}, 
Asperti and Ciabattoni~\cite{AspertiCiabattoni}, 
Shafer and Terwijn~\cite{ShaferTerwijn},  
and Golov and Terwijn~\cite{GolovTerwijn}.

\item Besides applications, also preserve $k$ and $s$, for a 
particular choice of these combinators. 
This stronger notion was studied in 
Bethke, Klop, and de Vrijer~\cite{BethkeKlopVrijer1996, BethkeKlopVrijer1999}.

\item There is an even weaker notion of embedding, using the notion of 
applicative morphism, that was introduced in Longley~\cite{Longley}, 
see also Longley and Normann~\cite{LongleyNormann}. 
Applicative morphisms do not have to preserve applications; 
instead, there have to be terms in the codomain that simulate 
applications in the domain. This notion is useful in realizability 
theory, see van Oosten~\cite{vanOosten}.

\end{itemize}

Our primary interest here is the notion of embedding where $k$ and $s$ 
are not considered part of the signature, but we will also be using 
the stronger notion of embedding, especially when we talk about 
completions.
To distinguish the two, we will refer to them as weak and strong 
embeddings. (In \cite{GolovTerwijn} weak embeddings were simply 
called embeddings.) 
To distinguish applications in different pcas, we also write 
$\A\models a\cdot b\darrow$ if this application is defined in $\A$.

\begin{definition} \label{def:embedding}
For given pcas $\A$ and $\B$, an injection 
$f: \A \to \B$ is a \emph{weak embedding} if for all $a, b \in \A$, 
\begin{equation}\label{emb}
\A\models ab\darrow \; \Longrightarrow \;
\B\models f(a)f(b)\darrow \, = f(ab).
\end{equation}
If $\A$ embeds into $\B$ in this way we write $\A\hookrightarrow \B$.
If in addition to \eqref{emb}, for a specific choice of combinators 
$k$ and $s$ of $\A$, $f(k)$ and $f(s)$ serve as combinators for $\B$, 
we call $f$ a {\em strong embedding}.

A (total) combinatory algebra $\B$ is called a {\em weak completion\/} of $\A$ 
if there exists an embedding $\A \hookrightarrow \B$.
If the embedding is strong, we call $\B$ a {\em strong completion}. 

Two pcas $\A$ and $\B$ are 
{\em isomorphic\/}, denoted by $\A\cong\B$, if there exists a bijection 
$f:\A\rightarrow \B$ such that for all $a,b\in\A$, 
$ab\darrow$ if and only if $f(a)f(b)\darrow$,
and in this case 
$$
f(a)\cdot f(b) = f(ab).
$$
\end{definition}

Besides the term completion, in the literature also the term 
extension is used.  
Bethke et al.\ \cite[Definition 1.5]{BethkeKlopVrijer1999} call 
a pca $\B$ an {\em extension\/} of a pca $\A$ if 
$\A \subseteq \B$, the application $\cdot_\A$ in $\A$ is the restriction 
of application $\cdot_\B$ in $\B$ to the domain of $\cdot_\A$, 
and $\B$ and $\A$ both have the {\em same\/} combinators 
$k$ and $s$ as in Theorem~\ref{Feferman}.

Now suppose that $f:\A\hookrightarrow\B$ is a strong embedding. 
Then $f(\A)\subseteq\B$ is an extension in the above sense, where 
both $f(\A)$ and $\B$ have combinators $f(k)$ and $f(s)$. 
Note that $\A \cong f(\A)$ if we define application in $f(\A)$ by 
$f(\A)\models f(a)\cdot f(b) \darrow =f(c)$ if and only if 
$\A\models a \cdot b\darrow = c$.
So we see that total extensions and completions amount to the same thing, 
provided that in both cases we have to specify whether to also fix 
$s$ and $k$ or not.

In \cite{Terwijn2023} it is shown that 
weak and strong embeddability and completions are different:
There exists a pca that is weakly completable, but not 
strongly completable.\footnote{The argument runs as follows:
First, $\K_2$ has strong completions. Second, the counterexample 
from Bethke et al.\ \cite{BethkeKlopVrijer1999} of a pca without 
strong completions can be weakly embedded into $\K_2$. 
Hence this weak embedding cannot be made strong.}

\section{Complexity of completions of $\K_1$} \label{sec:cmpl}

It was an important open question in the 1970s whether every pca 
has a strong completion. 
The question was raised by Barendregt, Mitschke, and Scott, and discussed 
at a meeting in Swansea in 1974, cf.\ \cite{BethkeKlopVrijer1999}.
(Note that this predates Feferman's paper \cite{Feferman}.)
A negative answer was obtained by Klop~\cite{Klop}, 
see also Bethke et al.\ \cite{BethkeKlopVrijer1999}. 
Other examples of incompletable pcas can be found in 
Bethke~\cite{BethkeJSL1987} and Bethke and Klop~\cite{BethkeKlop1996}.

In contrast to these examples, $\K_1$ does have strong completions. 
This follows from the criterion given in 
Bethke et {al.}~\cite{BethkeKlopVrijer1996} 
about the existence of unique head-normal forms, 
which is satisfied in~$\K_1$.
The completion of $\K_1$ resulting from this is a certain term model 
$T(\omega)/\!\!\sim$. 
On the face of it, the equivalence relation $\sim$ is not computable, 
since it is essentially equivalence of terms in~$\K_1$. 
That indeed it cannot be computable follows from Theorem~\ref{thm:noncomp}, 
and also from the fact that computable combinatorial algebras do not exist. 

We now discuss how another famous pca can be seen as a completion 
of $\K_1$.
Scott's graph model $\G$ is a pca defined on the power set 
$\PP(\omega)$, with application defined by 
$$
X \cdot Y = \bigset{x \mid \ex u (\la x,u\ra \in X \wedge 
D_u\subseteq Y)}.
$$
Here $D_u$ as always denotes the finite set with canonical code~$u$, 
and $\la \cdot, \cdot\ra$ denotes an effective pairing function. 
$\E$ is defined as the restriction of $\G$ to the c.e.\ sets. 
That $\G$ and $\E$ are (total) combinatory algebras is implicit in 
Scott~\cite{Scott}. 
Note the close connection with enumeration reducibility 
(cf.\ Odifreddi~\cite[XIV]{OdifreddiII}): For all sets $Y$ and $Z$, 
$Z\leq_e Y$ is equivalent with $X\cdot Y = Z$ for some c.e.\ set $X$. 

In Golov and Terwijn~\cite[Corollary 7.5]{GolovTerwijn}
it was shown that $\K_1 \hookrightarrow \E$, so that we 
can see $\E$ as a weak completion of $\K_1$. 
Note that equality on $\E$ is equality of c.e.\ sets, 
which is $\Pi^0_2$-complete when we represent c.e.\ sets by their 
indices.\footnote{In the discussion of $\E$ as a model of the 
$\lambda$-calculus, Odifreddi~\cite[p858]{OdifreddiII} also defines an 
application on $\omega$ by 
$e\cdot x = \text{ index of } W_e \cdot W_x$. 
Odifreddi says that this choice of application is ``equivalent'' to $\E$. 
However, this application on $\omega$ does not give a pca, as equality 
on $\omega$ is decidable, so this would contradict that $K_1$ does not 
have a computable weak completion (cf.\ Theorem~\ref{thm:noncomp}). 
So to obtain a pca, we have to divide out by equivalence of c.e.-indices, 
which gives precisely~$\E$. Also note that the model of the 
$\lambda$-calculus really uses c.e.\ sets, not indices.}
So this is more complicated than equality in the term 
model $T(\omega)/\!\!\sim$.

We can see $\E$ as a combination of $\K_1$ and $\K_2$. 
Indeed we have 
$$
\K_1 \hookrightarrow \E \hookrightarrow \K_2
$$ 
(the latter by \cite[Corollary 6.2]{GolovTerwijn}), 
so that we can view $\E$ as a kind of middle ground between 
Kleene's models, combining the totality of $\K_2$ with the 
countability of $\K_1$. This combination famously gives a model 
of the $\lambda$-calculus, as shown in Scott~\cite{Scott}, 
see Odifreddi~\cite[XIV.4]{OdifreddiII}.

Below, we use that for an embedding $f:\K_1\hookrightarrow \A$ of 
$\K_1$ into a pca $\A$, it suffices to know the value of $f$ on 
finitely many elements. This observation was also used in 
Golov and Terwijn~\cite[Theorem 4.1]{GolovTerwijn}, and it can be 
used to bypass the fact that embeddings such as $f$ do not have 
to be computable. 
Below we give a somewhat simpler version of this trick, using 
the following lemma. 

\begin{lemma}\label{lemma}
There exist elements $t_n\in\K_1$, $n\geq 1$, such that 
for all $n$ and $m$, 
\begin{align*}
t_n \cdot m = 
\begin{cases}
n & \text{\rm if $m=0$} \\
t_{n+1} & \text{\rm if $m>0$.}
\end{cases}
\end{align*}
\end{lemma}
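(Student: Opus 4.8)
The plan is to obtain the indices $t_n$ by a single, uniform application of Kleene's Recursion Theorem. Since application in $\K_1$ is $t_n\cdot m=\vph_{t_n}(m)$, what we need is a sequence of indices $t_n$ ($n\geq 1$) with $\vph_{t_n}(0)=n$ and $\vph_{t_n}(m)=t_{n+1}$ for all $m>0$. The only delicate point is the self-reference: the behaviour of $t_n$ on positive inputs is required to produce $t_{n+1}$, so the whole sequence $n\mapsto t_n$ has to be defined in terms of an index for itself.

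First I would introduce the partial computable function
\[
\Psi(e,n,m)\simeq
\begin{cases}
n & \text{if } m=0,\\
\vph_e(n+1) & \text{if } m>0,
\end{cases}
\]
where $e$ is thought of as an index of a candidate enumeration $n\mapsto t_n$. Applying the $s$-$m$-$n$ theorem to $\Psi$ gives a total computable function $d(e,n)$ with $\vph_{d(e,n)}(m)\simeq\Psi(e,n,m)$ for all $m$. Applying $s$-$m$-$n$ once more yields a total computable $\sigma$ with $\vph_{\sigma(e)}(n)=d(e,n)$, and the Recursion Theorem provides a fixed point $e_0$ with $\vph_{e_0}=\vph_{\sigma(e_0)}$; hence $\vph_{e_0}(n)=d(e_0,n)$ for all $n$, and in particular $\vph_{e_0}$ is total because $d$ is.

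Then I would set $t_n:=d(e_0,n)=\vph_{e_0}(n)$ for $n\geq 1$ and check the defining equations directly: for every $m$,
\[
\vph_{t_n}(m)=\vph_{d(e_0,n)}(m)=\Psi(e_0,n,m),
\]
which equals $n$ when $m=0$ and equals $\vph_{e_0}(n+1)=t_{n+1}$ when $m>0$. Since $d$ is total, each $\vph_{t_n}$ is total, so every application $t_n\cdot m$ is defined in $\K_1$, as the statement requires.

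I do not expect a genuine obstacle here; this is a routine recursion-theorem construction. The only thing to be careful about is the bookkeeping that makes the single index $e_0$ play the role of ``the index of $n\mapsto t_n$'' simultaneously on both sides of the recursion, together with the observation that totality of $d$ upgrades all the occurrences of $\simeq$ above to honest equalities.
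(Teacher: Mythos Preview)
Your proof is correct and takes essentially the same approach as the paper: a single application of the recursion theorem together with the $s$-$m$-$n$ theorem to produce the self-referential family $(t_n)_{n\geq 1}$. The paper's version is slightly more compact---it applies the recursion theorem directly to a binary function, obtaining a code $d$ with $\vph_d(n,m)=n$ if $m=0$ and $\vph_d(n,m)=S^1_1(d,n+1)$ if $m>0$, and then sets $t_n=S^1_1(d,n)$---whereas you take the fixed point on an index $e_0$ for the enumeration $n\mapsto t_n$ itself; but this is only a difference in packaging, not in substance.
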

\begin{proof}
By the recursion theorem, let $d\in\omega$ be a code such that 
\begin{align*}
\vph_d(n,m) = 
\begin{cases}
n & \text{if $m=0$} \\
S^1_1(d,n+1) & \text{if $m>0$.}
\end{cases}
\end{align*}
Here $S^1_1$ is the primitive recursive function from the S-m-n-theorem. 
Define $t_n = S^1_1(d,n)$. W.l.o.g.\ we may assume $t_n>0$ for all~$n$. 
Then 
$\vph_{t_n}(m) = 
\vph_{S^1_1(d,n)} = 
\vph_d(n,m) = t_{n+1}$ for $m>0$ and equal to $n$ otherwise.  
\end{proof}

In Golov and Terwijn~\cite[Corollary 4.2]{GolovTerwijn} it was 
proved that if $\K_1^X\hookrightarrow \A$ is a weak embedding of 
$\K_1^X$ into a pca $\A$ with $Y$-c.e.\ inequality, then $X\leq_T Y$. 
We obtain a stronger conclusion when we assume that $\A$ is total
and $Y$-computable.

\begin{theorem} \label{thm:noncomp}
Suppose $\A$ is a $Y$-computable combinatorial algebra, and 
that $f:\K_1^X\hookrightarrow \A$ is a weak embedding. 
Then $X <_T Y$. 
\end{theorem}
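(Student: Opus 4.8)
The plan is to prove the two inequalities $X \leq_T Y$ and $Y \not\leq_T X$ separately. The first, $X \leq_T Y$, follows essentially from the already-cited result Golov--Terwijn~\cite[Corollary 4.2]{GolovTerwijn}: since $\A$ is $Y$-computable, inequality on $\A$ is in particular $Y$-c.e., so a weak embedding $\K_1^X \hookrightarrow \A$ forces $X \leq_T Y$. (I would also recall the mechanism: using the elements $t_n$ from Lemma~\ref{lemma}, one recovers $X$ from the images $f(t_n)$, and only finitely much information about $f$ — in fact the single value $f(t_1)$ together with the application table of $\A$ — is needed to compute membership in $X$ from $Y$.) The substance of the theorem is therefore the strict part: we must rule out $Y \leq_T X$, i.e.\ show $Y <_T X$ is impossible, so that combined with $X \leq_T Y$ we get $X <_T Y$.

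For the strict inequality, suppose toward a contradiction that $Y \leq_T X$; combined with $X \leq_T Y$ this gives $X \equiv_T Y$, so we may as well assume $Y = X$ and derive that $\A$ cannot be $X$-computable. The idea is a relativized version of the classical fact that nontrivial combinatory algebras are not computable (Barendregt~\cite[5.1.15]{Barendregt}): one diagonalizes against the (total, $X$-computable) application table. Concretely, fix combinators $k, s$ in $\A$ and work with a numbering $\gamma$ witnessing $X$-computability, with total $X$-computable $\psi$ computing application and $X$-decidable equality. Using combinatory completeness in the total algebra $\A$ together with the $X$-computability of $\psi$ and of equality, one builds an element of $\A$ that computes a fixed-point-free or diagonal function, contradicting that every total function represented inside a nontrivial ca must have a fixed point (or, in the style of Barendregt's argument, one directly produces a contradiction from self-application). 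The key point making this go through is that $\A$ is \emph{total}: every term evaluates, so the diagonal construction never stalls on an undefined application, which is exactly why we get the strict conclusion here but only $X \leq_T Y$ in the partial case.

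The main obstacle is isolating precisely which internal representability fact about nontrivial total combinatory algebras to diagonalize against, and checking it relativizes cleanly. The cleanest route is probably: in a nontrivial ca $\A$ with $X$-computable application and equality, the set of codes $\{n : \gamma(n) = k\}$ is $X$-computable, and using the fixed point combinator one shows that for every $X$-computable function $h:\omega\to\omega$ there is $n$ with $\gamma(h(n)) = \gamma(n)$ applied appropriately — then pick $h$ to violate this by routing through $k \neq s$. One has to be careful that $\gamma$ need not be injective, so all statements must be phrased modulo the ($X$-decidable) equality relation; this is the only place where genuine care is needed, and it is routine once set up. Finally, since we have assumed $Y = X$ for contradiction, undoing the assumption yields $Y \not\leq_T X$ in general, and with $X \leq_T Y$ from the first paragraph we conclude $X <_T Y$. \qed
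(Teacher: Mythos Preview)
Your reduction of $X \leq_T Y$ to the cited Golov--Terwijn result is fine; the paper gives a self-contained version of the same argument using the $t_n$ from Lemma~\ref{lemma} and the characteristic function of~$X$.

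The gap is in your argument for $Y \not\leq_T X$. After assuming $Y\equiv_T X$ you propose to derive a contradiction purely from $\A$ being a nontrivial $X$-computable ca, by relativizing Barendregt~\cite[5.1.15]{Barendregt}. But that relativization is simply false: nontrivial $X$-computable combinatory algebras \emph{do} exist for many~$X$ --- by the paper's own Proposition~\ref{prop:PA} (and already Theorem~\ref{low}), every PA-complete set computes one, and there are low PA-complete sets. Barendregt's argument does not relativize because it rests on the coincidence of lambda-definable with partial computable; there is no analogous coincidence with $X$-partial-computable for nontrivial~$X$, so neither a Rice-style nor a Rogers-fixed-point diagonalization against the $\gamma$-codes can be pushed through with an $X$-oracle alone. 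Your last paragraph's claim that every $X$-computable $h:\omega\to\omega$ has a fixed point under $\gamma$ is exactly the step that fails: nothing about an arbitrary $X$-computable numbering of a ca forces this.

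What is missing is the one datum you dropped after the first paragraph: the embedding $f:\K_1^X\hookrightarrow\A$. The paper uses it directly for the strict part. Take an $X$-computably inseparable pair $A,B$ of $X$-c.e.\ sets and a code $e$ with $\vph_e^X(x)=0$ for $x\in A$ and $\vph_e^X(x)=1$ for $x\in B$. By the $t_n$-trick, $x\mapsto f(x)$ and hence $x\mapsto f(e)\cdot f(x)$ are computable in~$\A$ from the fixed parameters $f(0)$ and $f(t_1)$ together with application, hence $Y$-computable; since $\A$ is total this is everywhere defined, and $Y$-decidable equality then makes $\{x : f(e)\cdot f(x)=f(0)\}$ a $Y$-computable separator of $A$ and~$B$. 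Hence $Y\not\leq_T X$. The embedding is precisely what supplies the $X$-relativized representability that your attempted relativization of Barendregt lacks; without it, the assumption ``$\A$ is $X$-computable'' is consistent and you cannot reach a contradiction.
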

\begin{proof}
The successor function $S$ in $\K_1$ satisfies $S^n(0)=n$, 
but we need an element $t$ such that the $n$-fold application 
$t\cdot\ldots \cdot t\cdot 0$ equals~$n$, 
with the convention that application associates to the left, 
not to the right. 
Let $t = t_1$ be as in Lemma~\ref{lemma}. Then for the $n$-fold 
application we have 
$t\cdot\ldots \cdot t\cdot 0 = t_n \cdot 0 = n$ for every $n>0$.
Since $f$ is an embedding, we obtain from this 
\begin{equation} \label{t}
f(n) = 
f(t\cdot\ldots \cdot t\cdot 0) = 
f(t)\cdot\ldots \cdot f(t)\cdot f(0), 
\end{equation}
with the applications repeated $n$ times. 
So we see that the image of $f$ is completely determined 
by $f(t)$ and~$f(0)$.

To show that $Y\not\leq_T X$, 
let $A$, $B$ be a $X$-computably inseparable pair 
of $X$-c.e.\ sets, and let $e$ be a code such that for all~$x$, 
\begin{align*}
\vph_e^X(x) = 
\begin{cases}
0 & \text{if $x\in A$} \\
1 & \text{if $x\in B$} \\
\uparrow & \text{otherwise}.
\end{cases}
\end{align*}
Then we have in particular that 
\begin{align*}
\K_1^X \models e \cdot x \darrow = 0 &\Longrightarrow 
\A \models f(e)\cdot f(x) = f(0), \\
\K_1^X \models e \cdot x \darrow = 1 &\Longrightarrow 
\A \models f(e)\cdot f(x) = f(1).
\end{align*}
Since $\A$ is total, for every $x$ the application 
$f(e)\cdot f(x)$ is always defined in~$\A$, and by \eqref{t} it 
is equal to a term containing only $f(t)$, $f(0)$, and application. 
Because $\A$ is $Y$-computable, we can compute a code of 
$f(e)\cdot f(x)$ effectively from~$x$. 
(All we need is $e$, and codes of $f(t)$ and $f(0)$, all of which are fixed.)
Furthermore, since the definition of $Y$-computable pca entails that 
equality on $\A$ is $Y$-decidable, we can decide with $Y$ whether 
$f(e)\cdot f(x)$ is equal to $f(0)$ or $f(1)$ or not.
It follows that the set 
$C = \{x \mid \A\models f(e)\cdot f(x) = f(0)\}$ is $Y$-computable 
and separates $A$ and $B$, 
and since $A$ and $B$ have no $X$-computable separation 
$Y$ is not $X$-computable.

That $X\leq_T Y$ can be shown using a very similar argument. 
Instead of $\vph_e^X$ above, use the characteristic function 
$\vph_d^X(x)$ which is $0$ if $x\in X$ and $1$ if $x\notin X$.
Then the rest of the argument above, replacing $e$ with $d$, 
shows that $X$ is $Y$-computable.
So we have $Y\not\leq_T X$ and $X\leq_T Y$, hence $X<_T Y$.
\end{proof}

From Theorem~\ref{thm:noncomp} we see that in particular 
$\K_1$ does not have a computable weak completion, which also
follows from the fact that combinatorial algebras 
are never computable, see Barendregt~\cite[5.1.15]{Barendregt}.
We now show that this is optimal, namely that there exist 
completions of low Turing degree. 
(Recall that $Y$ is low if $Y'\leq_T \emptyset'$). 

\begin{theorem} \label{low}
There exists a strong completion $\A$ of $\K_1$ of low Turing degree, 
that is, $\A$ is $Y$-computable such that $Y$ is low. 
\end{theorem}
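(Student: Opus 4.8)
The plan is to build a strong completion of $\K_1$ by a forcing/finite-extension construction, interleaved with Friedberg-style lowness requirements so that the resulting structure is $Y$-computable with $Y' \leq_T \emptyset'$. First I would fix a known strong completion of $\K_1$ — for instance the term model $T(\omega)/\!\!\sim$ coming from the Bethke--Klop--de Vrijer criterion — and observe that, since $\K_1$ embeds strongly into it, the image of $\K_1$ is generated (as far as applications involving the successor-like element are concerned) by the images of $t$ and $0$ via the identity $f(n) = f(t)\cdots f(t)\cdot f(0)$ from the proof of Theorem~\ref{thm:noncomp}. The idea is that the completion carries very little information beyond the underlying $\K_1$ structure, so its atomic diagram — which elements are equal, and what the value of each application is — should be decidable in an oracle only a little above $\emptyset$.

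The key steps, in order, are: (1) set up a numbering $\gamma : \omega \to \A$ of the completion to be constructed, where $\A$ will be $T(\omega)/\!\!\sim$ presented via a computable enumeration of terms; the nontrivial content is the equality relation $\{(n,m) : \gamma(n) = \gamma(m)\}$, which is the $\sim$-relation. (2) Show that $\sim$ is $\Sigma^0_2$ (or more precisely, that the atomic diagram of $\A$ sits at a fixed low level of the arithmetical hierarchy), by analyzing the head-normal-form machinery: two terms are $\sim$-equivalent iff a certain effective reduction/matching procedure succeeds, and each instance of success or failure can be certified by a $\emptyset'$-question. (3) Instead of taking $Y = \emptyset'$ outright, thin it out: run a standard lowness construction — a priority argument with requirements $R_e$: ``$\Phi_e^Y(e)\darrow \Rightarrow \Phi_e^Y(e)\darrow$ with a preserved use'' — to produce a low set $Y$ that still computes the $\Sigma^0_2$ facts we need. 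Concretely, I would present $\A$ with a $\emptyset'$-computable numbering and then apply the relativized low basis theorem, or directly a forcing-with-$\Pi^0_1$-classes argument, to replace $\emptyset'$ by a low oracle $Y$ that decides the atomic diagram. (4) Verify that with this $Y$, both the application function $\psi$ of Definition~\ref{def:computable} (total, since $\A$ is a ca) and the equality relation are $Y$-computable, and that the embedding $\K_1 \hookrightarrow \A$ is strong, so $\A$ is genuinely a strong completion.

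The main obstacle I expect is step (2)–(3): pinning down the exact complexity of the equality relation $\sim$ on the term-model completion, and then arranging for a single low oracle $Y$ to decide it. The subtlety is that the low basis theorem gives a low set above $\emptyset$ lying in a given nonempty $\Pi^0_1$ class, but here we need $Y$ to compute a specific $\Sigma^0_2$ (hence not $\Pi^0_1$) object; the standard fix is to relativize to $\emptyset'$ and apply the low-over-$\emptyset'$ basis theorem to get $Y$ with $Y' \leq_T \emptyset''$, which is \emph{not} low — so instead one must realize $\sim$ as (the complement of) a path through a computable tree, i.e.\ show the equality relation of the completion can be \emph{coded} into a $\Pi^0_1$ class directly, perhaps by building the completion itself during the priority construction rather than starting from a fixed one. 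I therefore anticipate the real work is a simultaneous construction: build the combinatory algebra $\A$ and the low oracle $Y$ together, at each stage committing to finitely many equalities and application values consistent with combinatory completeness (using the $k,s$-terms), while meeting the lowness requirements by the usual preservation strategy. Checking that the finite commitments can always be extended to a total combinatory algebra — i.e.\ that the ``combinatory completeness'' constraints never conflict with a lowness-driven restraint — is the delicate consistency verification, and is where the Bethke--Klop--de Vrijer head-normal-form analysis of $\K_1$ does the essential job.
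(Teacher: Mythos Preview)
Your instinct in the last paragraph is exactly right: the completion should be realized as a path through a computable tree so that the Low Basis Theorem applies directly. But you never actually carry this out, and the simultaneous priority construction you sketch as a fallback is where the proposal has a real gap. The consistency check you flag---that any finite set of commitments to equalities, inequalities, and application values can be extended to a total combinatory algebra extending $\K_1$---is not something the Bethke--Klop--de Vrijer head-normal-form analysis gives you off the shelf; that analysis produces one specific completion, not a density lemma for finite conditions. Without such a density lemma your priority argument has no engine.

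The paper sidesteps all of this with a clean model-theoretic move that you almost stumble onto but do not take. Rather than analyzing the complexity of the equivalence $\sim$ on the specific term model $T(\omega)/\!\!\sim$, one writes down a c.e.\ first-order theory $\Cmpl$ (two-sorted: one sort carrying $\PA$, one sort carrying a total applicative structure, plus a function symbol $f$ for the embedding) whose axioms say exactly ``$f$ is a strong embedding of $\K_1$ into a combinatory algebra.'' Consistency of $\Cmpl$ follows from the mere existence of a strong completion (Bethke--Klop--de Vrijer), and now the complete consistent extensions of $\Cmpl$ form a nonempty $\Pi^0_1$ class in the usual way. The Low Basis Theorem hands you a complete extension $X$ of low degree. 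The completion $\A$ is then just the term algebra on symbols $f(S^n(0))$ under application, with equality decided by $X$; application is literally term concatenation, so $\A$ is $X$-computable.

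In short, the missing idea is: do not try to compute the equality relation of a \emph{fixed} completion; instead, let the Low Basis Theorem \emph{choose} the completion for you by picking a low complete theory. This replaces your steps (2)--(3) entirely and makes the ``consistency verification'' in your step (4) a one-line appeal to the existence of any strong completion at all.
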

\begin{proof} 
The outline of the proof is as follows. 
We first define a first-order base theory $\Cmpl$ such that each model 
of $\Cmpl$ gives rise to a strong completion of $\K_1$. 
The theory $\Cmpl$ will be consistent because we already know that 
$\K_1$ has strong completions.
We then use standard recursion theory to obtain a complete and 
consistent extension of $\Cmpl$ of low degree. 
This does not immediately give a completion of $\K_1$ of low degree, 
but we use a model theoretic argument to obtain a completion of 
the desired complexity. 

The language of $\Cmpl$ is two-sorted,\footnote{For more about 
multi-sorted languages and models see Monk~\cite[p483 ff]{Monk}. 
It is well-known that languages with finitely many sorts, as in 
our case, reduce to ordinary first-order logic by using 
predicates for the various sorts, as we do here directly. 
There is no need to keep the sorts $N$ and $A$ disjoint, so we 
have what is called a {\em lax\/} setting. 
When writing axioms for a sort, instead of writing 
$\fa a (N(a) \imp \vph)$ we also simply write $\fa a\in N. \, \vph$.}
with a predicate $N(x)$ intended to range over natural numbers, 
and a predicate $A(x)$ intended to range over a pca $\A$ that 
is a completion of $\K_1$. 
Furthermore the language has a function symbol $f$ with the 
intended meaning that $f:\K_1\to\A$ is a strong embedding. 
The language for the sort $N$ is the same as the language of 
arithmetic, and for this sort we take the axioms of $\PA$. 
The language of the sort $A$ is that of pcas, with one function 
symbol $\cdot$ for application in $\A$. 
Since $\cdot$ will be required to be total we add it as a function 
symbol, rather than as a relation symbol, which would have been
more appropriate for a partial operation. 
By arithmetization, we may assume that expressions of the form 
$\vph_a(b)\darrow =c$ are directly expressible for the sort $N$ 
for all standard numbers $a,b,c\in\omega$, where we represent a 
number $n\in\omega$ by the term $S^n(0)$.

So as axioms of $\Cmpl$ we have the following:

\begin{itemize}

\item
The axioms of $\PA$ for the sort $N$ (i.e.\ all axioms 
relative to $N$).

\item Axioms expressing that $f$ is an embedding 
from $\K_1$ to $\A$:

\begin{itemize}

\item $\fa a \in N (f(a)\in A)$.

\item $\fa a,b \in N(f(a)=f(b) \imp a=b)$.

\item For all $a,b,c\in\omega$ we have an axiom 
\begin{equation}\label{f}
\K_1 \models a\cdot b\darrow =c \;\Longrightarrow \;
\A \models f(a)\cdot f(b) = f(c).
\end{equation}
Note that the LHS can be expressed for the sort $N$ using the 
language of arithmetic, using terms $S^n(0)$ to express the natural 
number~$n$, and the RHS can be expressed for the sort~$A$. 

\end{itemize}

\item To ensure that $f$ is a strong embedding, we fix standard 
combinators $s$ and $k$ in $\K_1$ satisfying 
the axioms of Theorem~\ref{Feferman}. Note that these can be 
expressed for the sort~$N$. 
Also note that $s$ and $k$ are just standard numbers, so we do not 
need to add them to the signature. 
Next, we add axioms expressing that $f(s)$ and $f(k)$ also satisfy 
the axioms of Theorem~\ref{Feferman}, but now for the sort~$A$. 
The existence of these combinators $f(s)$ and $f(k)$ automatically 
ensures that $\A$ forms a pca. 
The fact that $\A$ should be total is handled by the fact that 
application is a function symbol in the language, so no explicit 
axiom is needed for this. 

\end{itemize}
Taken together, the axioms of $\Cmpl$ express that $f$ is a 
strong embedding from $\K_1$ to $\A$. 
Every model $M$ of $\Cmpl$ gives a strong completion of $\K_1$ as follows. 
Denote by $M\restr N$ and $M\restr A$ the part of $M$ restricted to 
the sorts $N$ and~$A$.
Then $M\restr N$ is a model of $\PA$ and $\A=M\restr A$ is a pca. 
Furthermore, the restriction of $f^M$ to the standard numbers $n\in\omega$ 
is an injection of $\omega$ into $\A$, which by \eqref{f} is an 
embedding of~$\K_1$, which is strong because 
$f(s)$ and $f(k)$ satisfy the axioms of Theorem~\ref{Feferman}. 
The values of $f^M$ on possible nonstandard elements 
of $M\restr N$ are irrelevant.

Now we let $T \subseteq 2^{<\omega}$ be a computable tree such that 
the set of infinite paths $[T]$ consists of 
all complete and consistent extensions of $\Cmpl$. 
(We encode sentences by natural numbers, so that paths in $2^\omega$ 
correspond to sets of sentences.)
The set of paths $[T]$ consists of all the separations of the sets 
\begin{align*}
\{\vph &\mid \Cmpl\vdash \vph\} \\
\{\vph &\mid \Cmpl\vdash \neg\vph\}
\end{align*}
of provable and refutable formulas.
Note that these are c.e.\ sets because $\Cmpl$ is a first-order theory, 
and they are disjoint because $\Cmpl$ is consistent, since we know by 
Bethke et al.\ \cite{BethkeKlopVrijer1996} that there exists a strong
completion of~$\K_1$. In particular the tree $T$ is infinite and 
$[T]$ is nonempty. 
By the Low Basis Theorem (Jockusch and Soare~\cite{JockuschSoare})
$T$ has a path of low Turing degree, which gives us a complete and 
consistent extension $X$ of $\Cmpl$ of low degree.

Since $X$ is consistent, it has a model $M$ by the completeness theorem, 
and by the remarks above $M$ defines a strong completion of $\K_1$, 
namely $M\restr A$. However, there is no guarantee that this completion 
is $X$-computable. 
But we do not need all of $M\restr A$;  it suffices to consider the 
smaller pca $\A$ consisting of all terms built from 
$f(n)$ for standard numbers $n\in \omega$ (represented as terms $S^n(0))$, 
and application $\cdot$.
Note that $\A$ is a pca because of the presence of $f(s)$ and~$f(k)$, 
and $\A$ is total since for $u$ and $v$ of the given form, 
$u\cdot v$ is again of this form.
Not every element of $\A$ is of the form $f(n$), for example it has
terms $f(a)\cdot f(b)$ such that $\vph_a(b)\uarrow$.
The pca $\A$ is a sub-pca of $M\restr A$ in the sense of 
\cite{ShaferTerwijn}.
To finish the proof of the theorem, we note that $\A$ is $X$-computable. 
Namely, given to terms $u$ and $v$ of the form above, we can simply 
compute their application as the term $u\cdot v$. 
Equality of terms in $\A$ is $X$-decidable because the theory $X$ 
is complete and thus contains all equalities $u=v$ and $u\neq v$ of 
such terms. 
So the sub-pca $\A$ of $M\restr A$ is total and $X$-computable, 
and hence of low degree since $X$ is low. 
\end{proof}

\section{Complete extensions of $\PA$}\label{sec:PA}

Following modern terminology, we call a Turing degree a 
PA degree if it is the degree of a complete extension of 
Peano arithmetic 
(cf.\ Downey and Hirschfeldt \cite[p84]{DowneyHirschfeldt}).
We will simply call a set PA-complete if it has PA degree. 

In this section we show that every (strong or weak) completion 
of $\K_1$ computes a PA degree, and vice versa.  
Since there exists PA-complete sets of low degree 
\cite[p87]{DowneyHirschfeldt}, 
Theorem~\ref{low} follows from the statement of this equivalence; 
however, this does not make the proof of Theorem~\ref{low} superfluous, 
since the tree $T$ from its proof is used in the proof of the equivalence.

\begin{proposition}\label{prop:PA}
Every PA-complete set computes a strong completion of~$\K_1$.
\end{proposition}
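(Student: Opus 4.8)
The plan is to reuse the infinite computable tree $T$ built in the proof of Theorem~\ref{low}, together with the standard characterization of PA degrees as exactly those degrees that compute a path through every infinite computable subtree of $2^{<\omega}$ (equivalently, a member of every nonempty $\Pi^0_1$ class; see \cite[p84]{DowneyHirschfeldt}). Concretely, let $Y$ be PA-complete. Recall that $T\subseteq 2^{<\omega}$ is a computable tree whose set of paths $[T]$ consists precisely of (the characteristic functions of) the complete and consistent extensions of the base theory $\Cmpl$, where sentences are encoded by natural numbers; and $T$ is infinite because $\Cmpl$ is consistent, by Bethke et al.\ \cite{BethkeKlopVrijer1996}. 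So $[T]$ is a nonempty $\Pi^0_1$ class, and therefore $Y$ computes some $X\in[T]$. Thus $X\leq_T Y$ and $X$ is a complete, consistent extension of $\Cmpl$.

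Next I would invoke, essentially verbatim, the model-theoretic part at the end of the proof of Theorem~\ref{low}. Since $X$ is consistent it has a model $M$ by the completeness theorem; then $M\restr A$ is a strong completion of $\K_1$, and the sub-pca $\A$ of $M\restr A$ consisting of all terms built from the elements $f(n)$ for standard $n\in\omega$ (represented as terms $S^n(0)$) together with application is total, is again a strong completion of $\K_1$ (the combinators $f(s)$ and $f(k)$ lie in $\A$), and is $X$-computable: application of two such terms $u,v$ is computed syntactically as $u\cdot v$, and equality of such terms is decided by the complete theory $X$, which contains each $u=v$ or $u\neq v$. Hence $\A$ is $X$-computable, and since $X\leq_T Y$, it is $Y$-computable. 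This establishes the proposition.

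There is essentially only one point that needs care, namely the very first step: that a PA-complete set computes a path through this particular computable tree $T$. But this is immediate from the cited characterization of PA degrees once one observes, as in the proof of Theorem~\ref{low}, that $[T]$ is a nonempty $\Pi^0_1$ class — the nonemptiness being exactly the consistency of $\Cmpl$, which is where the sufficient condition of \cite{BethkeKlopVrijer1996} is used. No new recursion-theoretic or combinatorial work beyond what already appears in the proof of Theorem~\ref{low} is required; the argument simply replaces the appeal to the Low Basis Theorem by the observation that a PA degree suffices to pick out a path of $T$.
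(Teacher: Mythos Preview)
Your proposal is correct and takes essentially the same approach as the paper: invoke the Scott--Solovay characterization of PA degrees to compute a path $X\in[T]$ through the tree $T$ from the proof of Theorem~\ref{low}, and then reuse the end of that proof to obtain an $X$-computable (hence $Y$-computable) strong completion of~$\K_1$. The paper's version is terser and cites Odifreddi rather than Downey--Hirschfeldt for the characterization, but the argument is the same.
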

\begin{proof}
By results of Scott and Solovay (cf.\ Odifreddi~\cite[V.5.36]{Odifreddi}), 
a set $Y$ is PA-complete if and only if it can compute an element of 
every nonempty $\Pi^0_1$-class. In particular, $Y$ can compute an element 
of $[T]$ for the computable tree $T$ from the proof of Theorem~\ref{low}.
By the rest of the proof of Theorem~\ref{low}, this implies that 
$Y$ computes a strong completion of $\K_1$, namely the term model 
defined at the end of the proof. 
\end{proof}

The following result strengthens Theorem~\ref{thm:noncomp}.

\begin{theorem}\label{thm:PA}
Suppose $\A$ is a $Y$-computable combinatorial algebra, and 
that $f:\K_1\hookrightarrow \A$ is a weak embedding. Then 
$Y$ is PA-complete. 
\end{theorem}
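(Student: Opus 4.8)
The plan is to show that any $Y$-computable total combinatory algebra $\A$ admitting a weak embedding $f : \K_1 \hookrightarrow \A$ can, using $Y$ as an oracle, separate every pair of disjoint c.e.\ sets, and then invoke the Scott--Solovay characterization (Odifreddi~\cite[V.5.36]{Odifreddi}) that a degree is PA-complete iff it can separate every pair of disjoint c.e.\ sets (equivalently, compute a member of every nonempty $\Pi^0_1$-class). This upgrades the inseparability argument already used in the proof of Theorem~\ref{thm:noncomp}, where we only produced a single separating set witnessing $Y \not\leq_T X$; here we must produce separations of \emph{all} disjoint c.e.\ pairs, uniformly.

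The key steps, in order. First I would recall from the proof of Theorem~\ref{thm:noncomp} that, taking $t = t_1$ from Lemma~\ref{lemma}, the $n$-fold left application $t \cdot \ldots \cdot t \cdot 0$ equals $n$ in $\K_1$, so by~\eqref{t} the image of $f$ on $\omega$ is entirely determined by the two fixed elements $f(t)$ and $f(0)$: each $f(n)$ is a specific closed term in $f(t), f(0)$ and application. Second, given an arbitrary pair of disjoint c.e.\ sets $A, B$, I would fix (by the s-m-n theorem, effectively from c.e.\ indices for $A$ and $B$) a single code $e$ with $\vph_e(x) = 0$ for $x \in A$, $\vph_e(x) = 1$ for $x \in B$, and $\vph_e(x)\uparrow$ otherwise; this is the standard construction of the partial computable function separating a disjoint c.e.\ pair. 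Third, since $f$ is an embedding, $x \in A \Rightarrow \A \models f(e)\cdot f(x) = f(0)$ and $x \in B \Rightarrow \A \models f(e)\cdot f(x) = f(1)$. Because $\A$ is total, $f(e)\cdot f(x)$ is always defined, and by~\eqref{t} it is a closed term in the fixed data $f(e)$ (itself a term in $f(t),f(0)$), $f(x)$ (likewise), and application; from $x$ we can compute a description of this term, hence — using that application in $\A$ is $Y$-computable — a $Y$-code for the element $f(e)\cdot f(x)$. Fourth, since equality on a $Y$-computable ca is $Y$-decidable, the set $C = \{x \mid \A \models f(e)\cdot f(x) = f(0)\}$ is $Y$-computable, and $A \subseteq C$, $B \cap C = \emptyset$, so $C$ is a $Y$-computable separator of $A$ and $B$. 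Fifth, since $A,B$ were arbitrary disjoint c.e.\ sets, $Y$ separates every such pair, so $Y$ is PA-complete by Scott--Solovay.

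The main obstacle I anticipate is bookkeeping rather than conceptual: making precise that the passage from $x$ to a $Y$-code of $f(e)\cdot f(x)$ is uniformly computable. One must note that $f(e)$ and $f(x)$ are obtained by applying the recipe~\eqref{t} with the \emph{fixed} codes of $f(t)$ and $f(0)$ and the inputs $e$ (fixed) and $x$ (the argument), so the whole term is built effectively, and then $Y$-computability of $\cdot$ lets us evaluate it to a $Y$-code; the codes of $f(t)$ and $f(0)$ are non-uniform parameters that depend on $\A$ and $f$, which is harmless since we only need \emph{some} reduction to $Y$. A secondary point worth stating explicitly is that the construction of $e$ from indices for $A$ and $B$ is the familiar "merge two enumerations" argument, so nothing new is needed there. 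With these observations in place the proof is essentially a rerun of the second half of the proof of Theorem~\ref{thm:noncomp}, now applied to an arbitrary disjoint c.e.\ pair instead of a single $X$-inseparable one.
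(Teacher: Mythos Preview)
Your proposal is correct and follows essentially the same line as the paper: both rerun the separation argument from Theorem~\ref{thm:noncomp} and then invoke a characterization of PA-completeness in terms of separating disjoint c.e.\ pairs. The only difference is that the paper uses the Jockusch--Soare characterization (PA-complete iff $Y$ separates \emph{one} effectively inseparable pair), so it runs the argument only once for a fixed effectively inseparable pair, whereas you invoke the Scott--Solovay characterization (separate \emph{every} disjoint c.e.\ pair) and hence quantify over all pairs; both routes are fine, the paper's is just slightly more economical.
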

\begin{proof}
By Jockusch and Soare \cite{JockuschSoare}, a set is PA-complete
if and only if it can compute a separation of an effectively 
inseparable pair of c.e.\ sets. (See also \cite[p86]{DowneyHirschfeldt}.)
Now let $A$, $B$ be a pair of effectively inseparable c.e.\ sets, 
for example, we can take the provable and refutable sentences of PA 
\cite[p513]{Odifreddi}.
Then the proof of Theorem~\ref{thm:noncomp} shows that $Y$ computes a
separation of $A$ and $B$, and hence $Y$ is PA-complete. 
\end{proof}

Putting Proposition~\ref{prop:PA} and Theorem~\ref{thm:PA} 
together, we obtain the following characterization:

\begin{corollary} The following are equivalent for any set $A$:
\begin{enumerate}[label={\rm (\roman*)}] 

\item $A$ set computes a weak completion of $\K_1$, 

\item $A$ set computes a strong completion of $\K_1$, 

\item $A$ is PA-complete.

\end{enumerate}
\end{corollary}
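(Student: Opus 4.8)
The plan is to assemble the corollary from the results already established in the excerpt, checking only that all three implications are in place. The equivalence of (i) and (ii) is the cheap direction: (ii) $\Rightarrow$ (i) is immediate, since a strong completion is in particular a weak completion, and any set computing the former computes the latter. For (iii) $\Rightarrow$ (ii), I would cite Proposition~\ref{prop:PA} directly, which says every PA-complete set computes a strong completion of $\K_1$ (the term model built from the path through the tree $T$). This already closes the loop (iii) $\Rightarrow$ (ii) $\Rightarrow$ (i), so the only remaining task is (i) $\Rightarrow$ (iii).

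For (i) $\Rightarrow$ (iii), suppose $A$ computes a weak completion $\B$ of $\K_1$. Here I need to be slightly careful about what ``computes a completion'' means: it should mean that $A$ computes a presentation of $\B$ in the sense of Definition~\ref{def:computable}, i.e.\ $\B$ is $A$-computable as a total combinatory algebra. Given that, I apply Theorem~\ref{thm:PA} with $Y = A$ and the weak embedding $f:\K_1 \hookrightarrow \B$: the conclusion is exactly that $A$ is PA-complete. So the corollary follows by stringing together Theorem~\ref{thm:PA}, Proposition~\ref{prop:PA}, and the trivial implication (ii) $\Rightarrow$ (i).

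The one subtlety worth flagging — and probably the only place where more than a sentence is needed — is the hypothesis of Theorem~\ref{thm:noncomp}/Theorem~\ref{thm:PA} that equality on $\A$ is $Y$-decidable. As the remark at the end of Section~3 of the excerpt points out, in a $Y$-computable (total) combinatory algebra this decidability is automatic: take $n$ with $\gamma(n)$ the identity combinator, and then $\gamma(a) = \gamma(b)$ iff $\gamma(n)\cdot\gamma(a) = \gamma(n)\cdot\gamma(b)$, which is decided by the $Y$-computable relation \eqref{ce}. So whenever we phrase ``$A$ computes a completion of $\K_1$'' as ``there is an $A$-computable total ca into which $\K_1$ weakly embeds,'' the hypotheses of Theorem~\ref{thm:PA} are met, and no extra work is needed. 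I do not anticipate a genuine obstacle here; the corollary is essentially a bookkeeping consequence of the two theorems, and the main thing to get right is to state the three conditions so that they match the hypotheses of the cited results verbatim.

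\begin{proof}
The implication (ii) $\Rightarrow$ (i) is trivial, since a strong embedding is in particular a weak embedding, so a strong completion of $\K_1$ is also a weak completion, and any set computing (a presentation of) the former computes (a presentation of) the latter.

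For (iii) $\Rightarrow$ (ii), this is exactly Proposition~\ref{prop:PA}: if $A$ is PA-complete, it computes an element of the nonempty $\Pi^0_1$-class $[T]$ for the computable tree $T$ from the proof of Theorem~\ref{low}, and the remainder of that proof yields an $A$-computable strong completion of $\K_1$.

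For (i) $\Rightarrow$ (iii), suppose $A$ computes a weak completion of $\K_1$, i.e.\ there is an $A$-computable combinatorial algebra $\B$ together with a weak embedding $f:\K_1\hookrightarrow\B$. Since $\B$ is an $A$-computable \emph{total} ca, equality on $\B$ is automatically $A$-decidable: picking $n$ with $\gamma(n)$ the identity element of $\B$, we have $\gamma(a)=\gamma(b)$ if and only if $\gamma(n)\cdot\gamma(a)=\gamma(n)\cdot\gamma(b)$, which is decidable from the $A$-computable relation~\eqref{ce}. Hence the hypotheses of Theorem~\ref{thm:PA} are met with $Y=A$, and we conclude that $A$ is PA-complete.

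Combining these three implications gives (i) $\Leftrightarrow$ (ii) $\Leftrightarrow$ (iii).
\end{proof}
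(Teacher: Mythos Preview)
Your proposal is correct and matches the paper's approach exactly: the corollary is obtained by combining Proposition~\ref{prop:PA} for (iii) $\Rightarrow$ (ii), the trivial implication (ii) $\Rightarrow$ (i), and Theorem~\ref{thm:PA} for (i) $\Rightarrow$ (iii). Your extra paragraph on decidability of equality is a harmless clarification, though note that in Definition~\ref{def:computable} equality decidability is already part of the definition of $Y$-computable, so the hypotheses of Theorem~\ref{thm:PA} are met by assumption rather than needing to be derived.
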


In the case of PA degrees more is known, namely that they are 
closed upwards. We do not know whether the degrees of 
(weak or strong) completions of $\K_1$ are also upwards closed.

\section{Nonstandard models of $\PA$}\label{sec:nonstandard}

As mentioned in van Oosten~\cite{vanOosten}, 
for every model $M$ of Peano Arithmetic $\PA$, we have a pca 
$\K_1(M)$ on $\omega$, with application defined by 
\begin{equation*} 
a\cdot b \darrow = c \text{ if } 
M \models \vph_a(b)\darrow = c
\end{equation*}
for all (standard) $a,b,c\in\omega$.
Note that $\K_1(M)$ is just Kleene's first model $\K_1$ 
``inside $M$''. It is a pca because we can pick combinators 
$k,s \in \K_1$ as in Theorem~\ref{Feferman} such that 
$\PA$ proves that they have the required properties.  

Note that for $a,b,c\in\omega$ we have that 
$a\cdot b\darrow = c$ in $\K_1$ 
if and only if $\ex y(T(a,b,y) \wedge U(y)=c)$, where $T$ and $U$ are 
the primitive recursive predicate and function from Kleene's normal 
form theorem (cf.\ Odifreddi~\cite{Odifreddi}). 
In a nonstandard model $M$, this $y$ can be nonstandard, so that 
{\em more\/} computations converge than in reality. 
In particular, in general we have 
\begin{equation} \label{nonstandard}
\K_1 \models a\cdot b \darrow = c \; \Longrightarrow \;
\K_1(M) \models a\cdot b \darrow = c,
\end{equation}
but not conversely. For example, consider a model $M$ of 
$\PA + \neg \con(\PA)$, where $\con(\PA)$ expresses the 
consistency of $\PA$. Such models exist by G\"odel's 
second incompleteness theorem. If we consider the p.c.\ 
function $\vph_a$ that on input $b$ searches for a proof 
of an inconsistency in $\PA$, then the computation 
$a\cdot b$ will converge in $\K_1(M)$, but not in $\K_1$ 
(assuming $\PA$ is consistent). 

Note that by \eqref{nonstandard}, we have an embedding 
$\K_1 \hookrightarrow \K_1(M)$ for every model $M$ of $\PA$.
This is in fact a strong embedding, as the same combinators 
$s$ and $k$ can be used in $\K_1(M)$.

Note that since $\PA$ proves that certain p.c.\ functions 
are nontotal, any model of $\PA$ has nontotal p.c.\ functions, 
and in particular $\K_1(M)$ is never a total pca (i.e. a ca). 
So we have:

\begin{proposition}
$\K_1(M)$ is never a weak completion of $\K_1$. 
\end{proposition}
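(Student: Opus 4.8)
The plan is to establish that $\K_1(M)$ is never a total pca and hence, since a weak completion of $\K_1$ must by definition be a (total) combinatory algebra, cannot be such a completion. The key observation is already assembled in the paragraph preceding the statement: $\PA$ proves, for a suitable index $a_0$, that $\vph_{a_0}$ is not total. Indeed, one can take $a_0$ to be an index for the partial computable function that on input $b$ searches for the least $y$ coding a proof of $0=1$ in $\PA$ (ignoring $b$); then $\PA$ proves $\fa b \, \neg \ex y \, T(a_0, b, y)$ is \emph{not} a theorem, but what we actually need is simpler: $\PA$ proves that \emph{some} p.c.\ function is not total, for instance via the formalized halting problem. Concretely, $\PA \vdash \ex b \, \neg \ex y \, T(c_0, b, y)$ for an index $c_0$ of the function $b \mapsto \mu z (T(b,b,z))$ diagonalized appropriately, or even more directly, one fixes an index $e_0$ with $\PA \vdash \fa y \, \neg T(e_0, 0, y)$, which exists because the empty partial function is computable and $\PA$ proves the relevant $\Delta^0_1$ facts about its index.

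First I would make precise which divergent computation to use. Let $e_0 \in \omega$ be an index of the nowhere-defined partial computable function, so that $\vph_{e_0}(0)\uarrow$, and moreover this divergence is provable: $\PA \vdash \fa y \, \neg T(S^{e_0}(0), S^0(0), y)$, since this is a true $\Pi^0_1$ sentence whose matrix is (primitive) recursive and which $\PA$ verifies for each standard witness and hence, being a statement about a fixed recursive predicate that is provably functional, proves outright (one may instead appeal to the standard fact that $\PA$ proves $\con(\PA) \to$ "some specified machine diverges", or simply that for the trivial machine $\PA$ proves totality fails — any of these routes works). Then in every model $M \models \PA$ we have $M \models \fa y \, \neg T(S^{e_0}(0), S^0(0), y)$, so by the definition of application in $\K_1(M)$ the computation $e_0 \cdot 0$ is undefined in $\K_1(M)$. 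Hence the application operator of $\K_1(M)$ is genuinely partial — there is a pair $(e_0, 0)$ on which it diverges — so $\K_1(M)$ is not a (total) combinatory algebra.

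The remaining step is purely definitional: by Definition~\ref{def:embedding}, a weak completion of $\K_1$ is required to be a total combinatory algebra into which $\K_1$ weakly embeds. Since $\K_1(M)$ fails the totality requirement, it cannot serve as a weak completion of $\K_1$ (nor, a fortiori, a strong one), regardless of the fact established in the preceding paragraph that $\K_1 \hookrightarrow \K_1(M)$ strongly. This completes the argument.

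I do not anticipate a genuine obstacle here; the only point requiring a little care is the choice of provably-divergent computation, and using the empty partial function's index (or invoking $\con(\PA)$ via G\"odel's second incompleteness theorem as the paper's preceding remark already does, which handles even models of $\PA + \neg\con(\PA)$) sidesteps any subtlety. One should also note, for completeness of the exposition, that the conclusion holds uniformly for \emph{all} models $M$ of $\PA$, standard or nonstandard — in the standard model the divergence is just the ordinary fact $\vph_{e_0}(0)\uarrow$, and in nonstandard models it persists precisely because the witnessing $\Pi^0_1$ sentence is a theorem of $\PA$.
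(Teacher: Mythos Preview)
Your argument is correct and is precisely the approach the paper takes: the paragraph immediately preceding the proposition already observes that $\PA$ proves some p.c.\ function is nontotal, so $\K_1(M)$ is never total and hence cannot be a completion. One small caution: your justification that $\PA \vdash \fa y\, \neg T(e_0,0,y)$ ``since this is a true $\Pi^0_1$ sentence \dots\ which $\PA$ verifies for each standard witness and hence \dots\ proves outright'' is a non-sequitur as stated (not every true $\Pi^0_1$ sentence is provable in $\PA$, and an arbitrary index for the empty function --- e.g.\ one searching for a $\PA$-inconsistency --- need not be provably divergent); but for a \emph{specific} $e_0$ coding a trivially looping machine $\PA$ does prove this by a straightforward induction on computation length, so your conclusion stands once $e_0$ is chosen with that in mind.
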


By Tennenbaum's theorem (cf.\ Boolos and Jeffrey~\cite{BoolosJeffrey}), 
there are no computable nonstandard models of $\PA$. 
More precisely, there are no nonstandard models in which $+$ 
is computable. (This is an extension of Tennenbaum's theorem 
due to Kreisel.) It follows that there are also no nonstandard 
models that are c.e., because $+$ is a total operation, and in 
a c.e.\ model it would actually be computable, contradicting 
Kreisel's result. 
So it would seem that the pcas $\K_1(M)$ for nonstandard models~$M$ 
cannot be used for the problems about c.e.\ pcas discussed in 
computable structure theory (see \cite{FokinaTerwijn}).
However, for models $M_0$ and $M_1$ of $\PA$ we do {\em not\/} 
have in general that 
$$
M_0 \not\cong M_1 \Longrightarrow \K_1(M_0) \not\cong \K_1(M_1).
$$
To see this, let $M_0=\omega$ be the standard model, and 
let $M_1$ be a nonstandard model that has the same first order 
theory as $\omega$ (which exists by the compactness theorem). 
Then $M_0 \not\cong M_1$, but 
$\K_1(M_0) = \K_1(M_1) = \K_1$.

In particular we see that it is possible that $\K_1(M)$ is c.e.\ 
(in the sense of Definition~\ref{def:ce}) 
for a nonstandard model $M$ of $\PA$. 
This prompts the following question:

\begin{question}
What are the possible c.e.\ degrees for such $\K_1(M)$?
Can $\K_1(M)$ be c.e.\ but not equal to $\K_1$?
\end{question}

In the following we note that though $\K_1(M)$ is always noncomputable, 
it can have low degree (if we do not require that it is also c.e.).

\begin{proposition} 
$\K_1(M)$ is always noncomputable. 
\end{proposition}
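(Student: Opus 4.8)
The plan is to show that if $\K_1(M)$ were computable, then the standard halting problem $\emptyset'$ would be computable, a contradiction. The key point is that the embedding $\K_1\hookrightarrow\K_1(M)$ from \eqref{nonstandard} is the identity on the underlying set $\omega$, and $\K_1$ is a \emph{sub}-structure of $\K_1(M)$ in the sense that the application of $\K_1$ is the restriction of the application of $\K_1(M)$. So any computable presentation of $\K_1(M)$ immediately gives information about $\K_1$.

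More concretely, first I would recall that $\K_1(M)$ has underlying set $\omega$ with the identity numbering, so equality is trivially decidable; the only content of computability is that the relation $a\cdot b\darrow = c$ (as computed in $M$) is computable, equivalently that the domain $\{(a,b)\mid \K_1(M)\models a\cdot b\darrow\}$ is computable (since from a code of $\K_1(M)$-application we can read off convergence and the value). Second, I would use \eqref{nonstandard}: for all standard $a,b,c$, if $\K_1\models a\cdot b\darrow = c$ then $\K_1(M)\models a\cdot b\darrow = c$. Hence the standard halting set $\emptyset' = \{a \mid \vph_a(a)\darrow\} = \{a\mid \K_1\models a\cdot a\darrow\}$ is contained in the domain of $\K_1(M)$-application restricted to the diagonal. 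Third — and this is the crux — I need the \emph{converse inclusion on a decidable set}: if $\K_1(M)\models a\cdot a\darrow = c$ for a \emph{standard} number $c$, does it follow that $\vph_a(a)\darrow$? This is where $M\models\PA$ does the work: if $M\models\vph_a(a)\darrow = c$ with $c$ standard, then $M\models\ex y\,(T(a,a,y)\wedge U(y)=c)$; but $c$ standard forces the witnessing computation to be genuine — running $\vph_a(a)$ for real and checking whether it halts with output $c$ is a semidecidable process, and $\PA$ correctly verifies halting computations, so $M$ agreeing that a computation halts with a specific \emph{standard} output implies it really does. The subtlety is that a computable presentation of $\K_1(M)$ hands us only \emph{a} code $c$ of the value, where $c$ could in principle be nonstandard from $M$'s perspective — but $c$ is literally a natural number we computed, hence standard. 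So from a computable presentation of $\K_1(M)$ we can decide, given $a$, whether $a\cdot a\darrow$ in $\K_1(M)$, and when it does, whether its value is $0$; by the above this coincides with deciding $\vph_a(a)\darrow \wedge \vph_a(a)=0$. Running this uniformly (or just using that the convergence relation of $\K_1(M)$ is computable and contains $\emptyset'$ while being contained in it on the diagonal) yields that $\emptyset'$ is computable, a contradiction.

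The main obstacle I anticipate is pinning down precisely the last step: one must be careful that ``$\K_1(M)\models a\cdot b\darrow$'' may hold for pairs where $\vph_a(b)\uarrow$ (nonstandard computations converge, as the $\neg\con(\PA)$ example shows), so one cannot simply say the domain of $\K_1(M)$ equals the domain of $\K_1$. The fix is exactly that a \emph{computable} presentation lets us compute a standard code of the value, and for standard codes of values the two application relations agree — a genuine halting computation with a given standard output is absolute between $M$ and $\omega$. Alternatively, and perhaps more cleanly, one observes that $\{(a,b)\mid \K_1\models a\cdot b\darrow\}\subseteq \{(a,b)\mid \K_1(M)\models a\cdot b\darrow\}$ and the latter, if $\K_1(M)$ is computable, is a computable set, while by a standard diagonalization (a computable presentation of any pca on $\omega$ with the identity numbering would make $\emptyset'$ computable, since the universal function is represented by a fixed code) we reach a contradiction. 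Either route works; I would present the argument via the comparison of application relations as above, keeping the standardness of computed codes explicit.
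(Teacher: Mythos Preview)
Your argument has a genuine gap at the crux step. You claim that if $\K_1(M)\models a\cdot a\darrow = c$ with $c$ standard then $\vph_a(a)$ really halts, because ``$M$ agreeing that a computation halts with a specific standard output implies it really does.'' This is false: only the $\Sigma_1$ direction is absolute. The very $\neg\con(\PA)$ example from the paper already refutes your claim --- let $\vph_a$ on any input search for a proof of $0=1$ in $\PA$ and output $0$ if it finds one. In a model $M\models\PA+\neg\con(\PA)$ we have $\K_1(M)\models a\cdot a\darrow = 0$, a perfectly standard value, yet $\vph_a(a)\uarrow$ in reality. So the set $\{a\mid \K_1(M)\models a\cdot a\darrow = 0\}$ need \emph{not} coincide with $\{a\mid \vph_a(a)\darrow = 0\}$, and your attempt to compute $\emptyset'$ (or even $\{a\mid\vph_a(a)=0\}$) from $\K_1(M)$ breaks down. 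Your alternative route via ``standard diagonalization'' is too vague to rescue this; note also that computability of the graph $\{(a,b,c)\mid a\cdot b\darrow = c\}$ does not obviously give computability of the domain $\{(a,b)\mid a\cdot b\darrow\}$.

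The fix is not to aim for $\emptyset'$ at all but to use only the forward direction \eqref{nonstandard}, which \emph{is} valid. Take the computably inseparable pair $A=\{x\mid\vph_x(x)\darrow=0\}$ and $B=\{x\mid\vph_x(x)\darrow=1\}$, and set $C=\{x\mid \K_1(M)\models x\cdot x\darrow = 0\}$. Then $C$ is computable from (the graph of) $\K_1(M)$, and by \eqref{nonstandard} alone we get $A\subseteq C$ and $B\subseteq \cmp{C}$ (the latter because $\K_1(M)\models x\cdot x = 1$ and application is single-valued with $0\neq 1$). So $C$ separates $A$ and $B$, contradicting their inseparability. This is exactly the paper's proof; you had all the ingredients, but tried to prove too much and asserted a false absoluteness.
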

\begin{proof}
Consider the computably inseparable pair of sets 
\begin{align*}
A &= \{x\in\omega \mid \vph_x(x)\darrow =0\} \\
B &= \{x\in\omega \mid \vph_x(x)\darrow =1\}.
\end{align*}
Now consider the set 
$C = \{x\in\omega \mid \K_1(M) \models x\cdot x\darrow =0\}$.
$C$ is computable from $\K_1(M)$, and it follows from 
\eqref{nonstandard} that it separates $A$ and $B$, 
from which it follows immediately that $\K_1(M)$ cannot be computable.
\end{proof}

\begin{proposition} 
$\K_1(M)$ can have low Turing degree. 
\end{proposition}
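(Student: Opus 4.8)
I want to show there is a model $M$ of $\PA$ such that the pca $\K_1(M)$ has low Turing degree as a structure on $\omega$. Since $\K_1(M)$ lives on $\omega$ with the identity numbering, equality is already decidable, and the only thing to control is the complexity of the application relation
\[
R = \bigl\{(a,b,c) \mid \K_1(M)\models \vph_a(b)\darrow = c\bigr\}.
\]
So it suffices to produce a completion $M$ of the theory of true arithmetic — or in fact of any consistent extension of $\PA$ — whose deductive closure (restricted to the relevant sentences) is low, and then observe that $R$ is computable from that completion. The natural tool is exactly the same as in the proof of Theorem~\ref{low}: the set of complete consistent extensions of $\PA$ is a nonempty $\Pi^0_1$-class (via a computable tree $T \subseteq 2^{<\omega}$ coding sentences), so by the Low Basis Theorem of Jockusch and Soare~\cite{JockuschSoare} there is such a completion $X$ of low Turing degree.

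**Key steps.** First I fix a computable tree $T$ whose infinite paths are the complete consistent extensions of $\PA$; this is nonempty since $\PA$ is consistent, so by the Low Basis Theorem pick a path $X$ with $X' \leq_T \emptyset'$. Next, let $M$ be a model of $X$, which exists by the completeness theorem; then $M \models \PA$, so $\K_1(M)$ is a pca. Now I claim $\K_1(M)$ is $X$-computable. Indeed, for standard $a,b,c \in \omega$, whether $\K_1(M)\models \vph_a(b)\darrow = c$ depends only on whether $M \models \ex y\,(T(a,b,y)\wedge U(y)=c)$, and this is a fixed arithmetical sentence (with the numerals $S^a(0)$, etc., substituted); its truth in $M$ is decided by the complete theory $X$. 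Hence $R$ is $X$-computable: to decide $(a,b,c)\in R$, write down the corresponding sentence $\sigma_{a,b,c}$ and ask whether $\sigma_{a,b,c}\in X$. Since equality on $\omega$ is trivially decidable, Definition~\ref{def:computable} (equivalently Definition~\ref{def:ce}) is satisfied, so $\K_1(M)$ is $X$-computable and therefore of low degree. Finally, note this does not contradict the preceding proposition that $\K_1(M)$ is never computable — it only shows the optimal complexity is low, matching the situation for completions of $\K_1$ in Theorem~\ref{low}.

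**Main obstacle.** The only subtle point is making sure the application relation really is captured by a fixed family of sentences decided by $X$, rather than requiring the full elementary diagram of $M$: since $\K_1(M)$ is a structure on the \emph{standard} $\omega$, only sentences $\sigma_{a,b,c}$ with standard numerals are needed, and each such sentence is provably equivalent over $\PA$ to a $\Sigma^0_1$ statement, so its membership in the complete theory $X$ is all we need. One should also check that the definition of $\K_1(M)$ is insensitive to the choice of model $M$ of $X$ — but this is immediate, since $\vph_a(b)\darrow = c$ in $M$ depends only on $\mathrm{Th}(M) = X$. So the argument is essentially a reprise of the model-theoretic trick in Theorem~\ref{low}, now applied to $\PA$ itself rather than to the theory $\Cmpl$.
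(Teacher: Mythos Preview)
Your proof is correct and follows essentially the same approach as the paper: take a low complete extension $X$ of $\PA$ via the Low Basis Theorem, let $M$ be any model of $X$, and observe that the application relation of $\K_1(M)$ is $X$-computable because each triple $(a,b,c)$ corresponds to a fixed sentence whose truth in $M$ is decided by $X$. The paper phrases the last step slightly differently (writing the relation as $R\cap X$ for a computable $R$ and noting $(R\cap X)'\leq_T X'\leq_T\emptyset'$), but the content is identical.
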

\begin{proof}
We have to show that there exists a model $M$ and a low set $Y$
such that $\K_1(M)$ is $Y$-computable 
(in the sense of Definition~\ref{def:ce}), i.e.\ such that the set 
$$
Z= \{(a,b,c)\mid \K_1(M)\models a\cdot b\darrow = c\}
$$ 
is $Y$-computable.
By Jockusch and Soare, there exists a complete and consistent 
extension $X$ of $\PA$ of low Turing degree. 
Let $M$ be a model with theory $X$. 
We can identify the set $Z$ with the set of sentences 
$a\cdot b\darrow = c$ that hold in $M$.
Since $Z$ is then just a subset of $X$ consisting of sentences 
of a specific form, there is a computable set $R$ such that 
$Z = R\cap X$, and we have $(R\cap X)' \leq_T X'\leq_T \emptyset'$ 
so that $Z$ is low.
\end{proof}

\end{document}